%%%%%%%%%%%%%%%%%%%%%%%%%%%%%%%%%%%%%%%%%%%%%%%%%%%%%%%%%%%%%%%%%%%%%%%%%
%
% Version of August 22, 2009
%
% Last changes by ON
%
%%%%%%%%%%%%%%%%%%%%%%%%%%%%%%%%%%%%%%%%%%%%%%%%%%%%%%%%%%%%%%%%%%%%%%%%%%
\documentclass[a4paper,10pt,oneside,reqno]{amsart}
\usepackage{amssymb,amscd}
%
 % To enlarge line spacing
\input{xy}
\xyoption{all}

% Favourite macros:
%
% THEOREMS ---------------------------------------------------------------
\newtheorem{theo}{Theorem}[section]

\theoremstyle{definition}

\theoremstyle{remark}
\newtheorem{rem}[theo]{Remark}
\newtheorem*{rem*}{Remark}
\newtheorem{que}[theo]{Question}

%
% MATH -------------------------------------------------------------------
\let\ge\geqslant
\let\le\leqslant
\let\emptyset\varnothing
\let\ES\varnothing

\let\phi\varphi
\let\kappa\varkappa
\let\hra\hookrightarrow

\newcommand{\uni}[1]{\mathbf{1}_{{#1}}}
\newcommand{\Comp}{\mathcal{C}\mathrm{omp}}

\newcommand{\Convmm}{\mathcal{C}\mathrm{onv}_{\scriptstyle\max,\min}}
\newcommand{\BConvmm}{\mathcal{B}\mathrm{i}\mathcal{C}\mathrm{onv}_{\scriptstyle\max,\min}}

\newcommand\subcl{\mathrel{\underset{\mathrm{cl}}{\subset}}}
\newcommand\subop{\mathrel{\underset{\mathrm{op}}{\subset}}}

\newcommand{\Mup}{\mathbb{M}_{\scriptscriptstyle\cap}}
\newcommand{\Mdn}{\mathbb{M}_{\scriptscriptstyle\cup}}
\newcommand{\mup}{M_{\scriptscriptstyle\cap}}
\newcommand{\mdn}{M_{\scriptscriptstyle\cup}}

\newcommand{\etadn}{\eta_{\scriptscriptstyle\cup}}
\newcommand{\mudn}{\mu_{\scriptscriptstyle\cup}}
\newcommand{\xidn}{\xi_{\scriptscriptstyle\cup}}
\newcommand{\xiup}{\xi_{\scriptscriptstyle\cap}}

\newcommand{\Delom}{\Delta_{\scriptscriptstyle\otimes}}
\newcommand{\Delop}{\Delta_{\scriptscriptstyle\oplus}}
\newcommand{\op}{\oplus}
\newcommand{\om}{\otimes}
\newcommand{\bp}{\mathop{\bar\oplus}}
\newcommand{\bm}{\mathop{\bar\otimes}}

\newcommand{\BBN}{\mathbb{N}}

\newcommand{\BBR}{\mathbb{R}}

\newcommand{\BBG}{\mathbb{G}}
\newcommand{\BBM}{\mathbb{M}}

\newcommand{\CCA}{\mathcal{A}}

\newcommand{\CCC}{\mathcal{C}}

\newcommand{\CCF}{\mathcal{F}}

\newcommand{\CCS}{\mathcal{S}}
%%

%%
% ------------------------------------------------------------------------

\begin{document}
\title[Idempotent convexity and algebras for the capacity monad\dots]%
{Idempotent convexity and algebras for the capacity monad and its
submonads}
\author{Oleh Nykyforchyn, Du\v san Repov\v s}

\address{
Department of Mathematics and Computer Science \newline
\indent Vasyl' Stefanyk Precarpathian National University \newline
\indent Shevchenka 57, Ivano-Frankivsk, Ukraine\newline
\email{oleh.nyk@gmail.com}\newline
\mbox{}
 Faculty of Mathematics and Physics\newline
\indent and Faculty of Education\newline
\indent University of Ljubljana, P.O.B.2964\newline
\indent Ljubljana, Slovenia\newline
\email{dusan.repovs@guest.arnes.si}\newline
\mbox{}}%, dusan.repovs@guest.arnes.si}
\subjclass[2000]{%
Primary:
18B30,% Topological categories
Secondary:
18C20,% Algebras and Kleisli categories
06B35,% Continuous lattices and posets, applications
52A01% Axiomatic and generalized convexity
}% AMS Subject Classification (2000)
\keywords{\em capacity functor, algebra for a monad, idempotent semimodule,
 idempotent convexity}
\begin{abstract}
Idempotent analogues of convexity are introduced. It is proved
that the category of algebras for the capacity monad in the
category of compacta is isomorphic to the category of
$(\max,\min)$-idempotent biconvex compacta and their biaffine maps.
It is also shown that the category of algebras for the monad of
sup-measures ($(\max,\min)$-idempotent measures) is isomorphic to
the category of $(\max,\min)$-idempotent convex compacta and their
affine maps.
\end{abstract}
\date{\today}
\maketitle

{\let\thefootnote\empty
\footnotetext{This research was supported by the Slovenian Research
Agency grants P1-0292-0101, J1-9643-0101 and BI-UA/09-10-002, and
the State Fund of Fundamental Research of Ukraine grant 25.1/099.}
}

\section*{Introduction}

\emph{Monads} (also called \emph{triples}, \cite{BW,EilMoore})
in topological categories and algebras for these monads are closely
related to important objects of analysis and topological algebra.
\'Swirszcz~\cite{Swir74} proved that algebras and their morphisms
for the probability measure monad are precisely convex compact maps
of locally convex vector topological spaces and continuous affine
maps.

By a result of Day (cf. Theorem 3.3 of \cite{Day75}), the category
of algebras for the filter monad in the category of sets is the
category of continuous lattices and their mappings that preserve
directed joins and arbitrary meets. Due to Wyler~\cite{Wyl81}
algebras for the hyperspace monad are compact Lawson semilattices.
Zarichnyi~\cite{ZarLambda} has shown that the category of algebras
for the superextension monad is isomorphic to the category of
compacta with (fixed) almost normal $T_2$-subbase and their convex
maps. We will use a result of Radul~\cite{RadG90} who introduced
the~inclusion hyperspace triple and proved that its algebras and
their morphisms are in fact compact Lawson lattices and their
complete homomorphisms.

Unlike probability (normed additive) measures which are a
traditional object of investigation by means of categorical
topology, their non-additive analogues were paid less attention
from this point of view. Meanwhile capacities (normed non-additive
measures) that were introduced by Choquet~\cite{Ch} and
rediscovered by Sugeno under the name {\sl fuzzy measures}
have found
numerous applications, e.g. in decision making under
uncertainty~\cite{EK,Sch}. One of the most promising classes of
non-additive measures is one of idempotent measures~\cite{Ak}. For
other important classes of capacities and their topological
properties see \cite{BriVer94}. Upper semicontinuous capacities on
compact spaces were systematically studied in~\cite{Zh}.

Therefore it seems natural to use methods of categorical topology
to study non-additive measures. Nykyforchyn and
Zarichnyi~\cite{CapZN} defined the capacity functor and the
capacity monad in the category of compacta, and proved basic
topological properties of capacities on metrizable and
non-metrizable compacta. Two important dual subfunctors of the
capacity functor, namely of $\cup$-capacities (possibility
measures) and of $\cap$-capacities (necessity measures) were
introduced in \cite{NHl}, and it was shown that they lead to
submonads of the capacity monad. The aim of this paper is to
describe categories of algebras for the capacity monad, for the
monads of $\cup$-capacities and of $\cap$-capacities, and to
present internal relations of the capacity monad and its submonads
with idempotent mathematics and generalizations of convexity (in
the form of join geometry).

\section{Preliminaries}

A \emph{compactum} is a compact Hausdorff topological space. We
regard the unit segment $I=[0;1]$ as a subspace of the real line
with the natural topology. We write $A\subcl B$ (resp.~$A\subop B$)
if $A$ is a closed (resp.\ an open) subset of a space $B$. For a
set $X$ the identity mapping $X\to X$ is denoted by $\uni{X}$. For
a compactum $X$ we denote by $\exp X$ the set of all nonempty
closed subsets of $X$ with the \emph{Vietoris topology}. A base of
this topology consists of all sets of the form
$$
\langle U_1,U_2,\dots,U_n\rangle=
\{F\in\exp X\mid F\subset U_1\cup U_2\cup\dots\cup U_n, F\cap U_i\ne\ES
\text{ for all }1\le i \le n\},% =1,2,\dots,n\},
$$
where $n\in\BBN$ and all $U_i\subset X$ are open. The space $\exp
X$ for a compactum $X$ is a compactum as well. A nonempty closed
subset $\CCF\subset
\exp X$ is called an
\emph{inclusion hyperspace} if for all $A,B\in\exp X$ an inclusion
$A\subset B$ and $A\in\CCF$ imply $B\in\CCF$. The set $GX$ of all
inclusion hyperspaces is closed in $\exp(\exp X)$. For more on
$\exp X$ and $GX$ see~\cite{TZ}.

We regard any set $S$ with an~idempotent, commutative and
associative binary operation $\op:S\times S\to S$ (with an additive
denotation) as an~upper semilattice with the partial order $x\le
y\iff y\ge x\iff x\op y=y$ and the pairwise supremum $x\op y$ for
$x,y\in S$. Similarly, given an~idempotent, commutative and
associative operation $\om:S\times S\to S$ (with a~multiplicative
denotation), we regard $S$ as a lower semilattice with the partial
order $x\le y\iff y\ge x\iff x\om y=x$ and $x\om y$ being the
infimum of $x,y\in S$.

If two operations $\op,\om:L\times L\to L$ are idempotent,
commutative and associative, and the distributive laws and the laws
of absorption are valid, then $L$ is a distributive lattice w.r.t.\
the partial order $x\le y\iff y\ge x\iff x\op y=y\iff x\om y=x$,
and $x\op y$ and $x\om y$ are the pairwise supremum and the
pairwise infimum of $x,y\in L$.

If $f,g$ are functions with the same domain and values in a poset,
then by $f\lor g$ and $f\land g$ we also define their poinwise
supremum and infimum. If $f$ is a function with values in a set $L$
with an operation ``$\op$'' (or ``$\om$''), and $\alpha\in L$, then
$(\alpha\op f)(x)=\alpha\op f(x)$ (resp.\ $(\alpha\om
f)(x)=\alpha\om f(x)$) for any valid argument $x$.

An~\emph{idempotent semiring} is a set $R$ with binary operations
$\op,\om:R\times R\to R$ such that $(R,\op)$ is an~abelian monoid
with a neutral element $0$, ``$\op$'' is idempotent, i.e. $a\op
a=a$ for all $a\in R$, $(R,\om)$ is a~monoid with a neutral element
$1$, the operation ``$\om$'' is distributive over ``$\op$''~:
$a\om(b\op c)=(a\om b)\op(a\om c)$ for all $a,b,c\in R$, and $0\om
a=a\om 0=0$ for all $a\in R$. The most popular idempotent semiring
is the~\emph{tropical semiring} $(\BBR\cup\{-\infty\},\op,\om)$,
where $x\op y=\max\{x,y\}$, $x\om y=x+y$, which is the basis of
\emph{tropical mathematics}~\cite{KolMas}. A~little less
extensively studied is the~idempotent semiring
$(\BBR\cup\{\pm\infty\},\op,\om)$, where $x\op y=\max\{x,y\}$,
$x\om y=\min\{x,y\}$. We will use a semiring which is algebraically
and topologically isomorphic to it, but more convenient for our
purposes, namely $(I,\op,\om)$ with $x\op y=\max\{x,y\}$, $x\om
y=\min\{x,y\}$. In general, any distributive lattice $(L,\op,\om)$
with top and bottom elements is an idempotent semiring.

See~\cite{BW,ML} for the definitions of category, morphism,
functor, natural transformation, monad, algebra for a monad,
morphism of algebras, tripleability and related facts. By
$\uni{\CCC}$ we denote the identity functor in a category $\CCC$.
Recall that all $\mathbb{F}$-algebras for a fixed monad
$\mathbb{F}$ and all their morphisms form a~\emph{category of
$\mathbb{F}$-algebras}.

It is proved in \cite{TZ} that costructions $\exp$ and $G$ can be
extended to functors in $\Comp$ that are functorial parts of
monads. For a continuous map of compacta $f:X\to Y$ the maps $\exp
f:\exp X\to
\exp Y$ and $Gf:GX\to GY$ are defined by the formulae $\exp
f(F)=\{f(x)\mid x\in F\}$, $F\in\exp X$ and $Gf(\CCF)=\{B\subcl
Y\mid B\supset f(A)\text{ for some }A\in\CCF\}$, $\CCF\in GX$. For
the \emph{inclusion hyperspace monad} $\BBG=(G,\eta_G,\mu_G)$ the
components $\eta_GX:X\to GX$ and $\mu_GX:G^2X\to GX$ of the unit
and the multiplication are defined as follows~:
$\eta_G(x)=\{F\in\exp X\mid x\in F\}$, $x\in X$ and
$\mu_GX(\mathrm{F})=\bigcup\{\bigcap\CCA\mid\CCA\in\mathrm{F}\}$,
$\mathrm{F}\in G^2X$.

We denote by $\Comp$ the
\emph{category of compacta} that consists of all compacta and their
continuous mappings. If there is a natural transformation of one
functor in $\Comp$ to another with all components being topological
embeddings, then the first functor is called a
\emph{subfunctor} of the latter~\cite{TZ}. Similarly an
\emph{embedding of monads} in $\Comp$ is a morphism of monads with
all components being topological embeddings. If there exists an
embedding of one monad in $\Comp$ into another one, then the first
monad is called a
\emph{submonad} of the latter.

Now we present the main notions and results of~\cite{CapZN,NHl}
that concern capacities on compacta, the capacity functor and the
capacity monad. We call a function $c:\exp X\cup\{\emptyset\}\to I$
a \emph{capacity} on a compactum $X$ if the following three
properties hold for all closed subsets $F$, $G$ of~$X$~:
\begin{enumerate}
\item
$c(\emptyset)=0$, $c(X)=1$;
\item
if $F\subset G$, then $c(F)\le c(G)$ (monotonicity);
\item
if $c(F)<a$, then there exists an open set $U\supset F$ such that
$G\subset U$ implies $c(G)<a$ (upper semicontinuity).
\end{enumerate}
We extend a capacity $c$ to all open subsets in $X$ by the
formula~:
$$
c(U)=\sup\{c(F)\mid F\subcl X, F\subset U\}, U\subop X.
$$

It is proved in \cite{CapZN} that the set $MX$ of all capacities on
a compactum $X$ is a compactum as well, if a topology on $MX$ is
determined by a subbase that consists of all sets of the form
$$
O_-(F,a)=\{c\in MX\mid c(F)<a\},
$$
where $F\subcl X$, $a\in \BBR$, and
\begin{multline*}
O_+(U,a)=\{c\in MX\mid c(U)>a\}=\\
\{c\in MX\mid \text{there exists a compactum } F\subset U, c(F)>a\},
\end{multline*}
where $U\subop X$, $a\in \BBR$.

The assignment $M$ extends to the \emph{capacity functor} $M$ in
the category of compacta, if the map $Mf:MX\to MY$ for a continuous
map of compacta $f:X\to Y$ is defined by the formula
$$
Mf(c)(F)=c(f^{-1}(F)),
$$
where $c\in MX$, $F\subcl Y$. This functor is the functorial part
of the \emph{capacity monad} $\mathbb{M}=(M,\eta,\mu)$ that was
described in \cite{CapZN}. Its unit and multiplication are defined
by the formulae
$$
\eta X(x)=\delta_x\text{ where }
\delta_x(F)=
\begin{cases}
1,\text{ if }x\in F,\\
0,\text{ if }x\notin F,\\
\end{cases}
\quad
\text{(a Dirac measure concentrated in $x$)}
$$
$$
\mu X(\CCC)(F)=
\sup\{\
\alpha\in I
\mid
\CCC(\{c\in MX\mid c(F)\ge\alpha\})\ge\alpha
\},
$$
where $x\in X$, $\CCC\in M^2X$, $F\subcl X$.

We call a capacity $c\in MX$ a~\emph{$\cup$-capacity} (also called
\emph{sup-measure} or \emph{possibility measure}) if
$c(A\cup B)=\max\{c(A),c(B)\}$ for all $A,B\subcl X$. A capacity
$c\in MX$ a~\emph{$\cap$-capacity} (or \emph{necessity
measure})~\cite{NHl} if $c(A\cap B)=\min\{c(A),c(B)\}$ for all
$A,B\subcl X$. The sets of all $\cup$-capacities and of all
$\cap$-capacities on a compactum $X$ are denoted by $\mdn X$ and
$\mup X$. It is proved in \cite{NHl} that $\mdn X$ and $\mup X$ are
closed in $MX$, $Mf(\mdn X)\subset \mdn Y$ and $Mf(\mup X)\subset
\mup Y$ for any continuous map of compacta $f:X\to Y$, thus we
obtain subfunctors $\mdn,\mup$ of the capacity functor~$M$.
Moreover, we get submonads $\Mdn$ and $\Mup$ of the capacity
monad~$\BBM$.

Observe that for a $\cup$-capacity $c$ and a closed set $F\subset
X$ we have $c(F)=\max\{c(x)\mid x\in F\}$, and $c$ is completely
determined by its values on singletons. Therefore we often identify
$c$ with the upper semicontinuous function $X\to I$ that sends each
$x\in X$ to $c(\{x\})$, and write $c(x)$ instead of $c(\{x\})$.
Conversely, each upper semicontinuous function $c:X\to I$ with
$\max c=1$ determines a $\cup$-capacity by the formula
$c(F)=\max\{c(x)\mid x\in F\}$, $F\subcl X$. A similar, but a
little more complicated observation is valid for $\cap$-capacities.

\section{Algebras for the monads of $\cup$-capacities and $\cap$-capacities}

Let an operation $ic:X\times I\times X\to X$ be given for a set
$X$. In the sequel we denote $ic(x,\alpha,y)$ by $x\op (\alpha\om
y)$ or simply by $x\op\alpha y$ for the sake of shortness. We call
$ic$ an~\emph{idempotent convex combination} of two points in~$X$
if the following equalities are valid for all $x,y,z\in X$,
$\alpha,\beta\in I$~:

1) $x\op \alpha x=x$;

2) $(x\op\alpha y)\op \beta z=(x\op \beta z)\op \alpha y$;

3) $x\op \alpha(y\op\beta z)=(x\op\alpha y)\op (\alpha\om\beta)z$;

4) $x\op 1y=y\op 1x$;

5) $x\op 0y=x$.

We also call the set
$$
\Delop^n=\{(\alpha_0,\alpha_1,\dots,\alpha_n)\in
I^{n+1}\mid \alpha_0\op\alpha_1\op\dots\op\alpha_n=1\}
$$
the~\emph{(idempotent) $n$-dimensional $\op$-simplex}. Now,
assuming 1)--5), for any coefficients
$(\alpha_0,\alpha_1,\dots,\alpha_n)\in\Delop^n$ and elements
$x_0,x_1,\dots,x_n\in X$ we define the
\emph{idempotent convex combination} of $n+1$ points as follows
(assume that $\alpha_k=1$ for some $0\le k\le n$)~:
\begin{multline*}
\alpha_0x_0\op\alpha_1x_n\op\dots\op\alpha_nx_n=\\
({\dots}((x_k\op\alpha_0x_0)\op\dots)\op\alpha_{k-1}x_{k-1})
\op\alpha_{k+1}x_{k+1})\op\dots)\op\alpha_nx_n.
\end{multline*}

Conditions 2),4) assure that the combination is well defined and
does not depend on the order of summands. Obviously $1x\op\alpha
y=x\op\alpha y$. By 5) summands with zero coefficients can be
dropped, and by 1) and 3), if two summands contain the same point,
then a summand with a greater coefficient absorbs a summand with a
less coefficient. Conditions 2),3) also imply a ``big associative
law''~:
\begin{multline*}
\alpha_0(\beta^0_0x^0_0\op\dots\op\beta^0_{k_0}x^0_{k_0})\op
\alpha_1(\beta^1_0x^1_0\op\dots\op\beta^1_{k_1}x^1_{k_1})\op
\dots\op
\alpha_n(\beta^n_0x^n_0\op\dots\op\beta^n_{k_n}x^n_{k_n})=
\\
(\alpha_0\om\beta^0_0)x^0_0\op\dots\op(\alpha_0\om\beta^0_{k_0})x^0_{k_0}\op
(\alpha_1\om\beta^1_0)x^1_0\op\dots\op(\alpha_1\om\beta^1_{k_1})x^1_{k_1}\op
\\
\dots\op
(\alpha_n\om\beta^n_0)x^n_0\op\dots\op(\alpha_n\om\beta^n_{k_n})x^n_{k_n},
\end{multline*}
where $x^i_j\in X$,
$(\alpha_0,\alpha_1,\dots,\alpha_n)\in\Delop^n$,
$(\beta^i_0,\beta^i_0,\dots,\beta^i_{k_i})\in\Delop^{k_i}$ for
$i=0,1,\dots,n$.

Properties 1)--4) imply that the operation $\lor:X\times X\to X$,
$x\lor y=x\op1y$ for all $x,y\in X$, is commutative, associative
and idempotent, thus $(X,\lor)$ is an upper semilattice with
a~partial order $x\le y\iff x\lor y=y$ for which $x \lor y$ is
a~pairwise supremum of $x$ and $y$. If $X$ is a compactum such that

6) for a neighborhood $U$ of any element $x\in X$ there is a
neighborhood $V$ of $x$, $V\subset U$, such that $y\op 1z\in V$ for
all $y,z\in V$;

then each point of $X$ has a local base consisting of
subsemilattices, and $(X,\lor)$ is a compact Lawson upper
semilattice~\cite{Laws69}. We will call a pair $(X,ic)$ of a
compactum $X$ with idempotent convex combination $ic$ that
satisfies the property 6) a~\emph{ $(\max,\min)$-idempotent convex
compactum}.

\begin{theo}\label{Mdn-alg}
Let $X$ be a compactum. There is a one-to-one correspondence
between continuous maps $\xi:\mdn X\to X$ such that the pair
$(X,\xi)$ is an~$\Mdn$-algebra, and continuous idempotent convex
combinations $ic:X\times I\times X\to X$ such that $(X,ic)$ is
a~$(\max,\min)$-idempotent convex compactum.

If for a continuous $ic:X\times I\times X\to X$ conditions 1)--5)
are valid, then 6) implies a stronger property~:

6+) for a neighborhood $U$ of any element $x\in X$ there is a
neighborhood $V$ of $x$, $V\subset U$, such that $y\op\alpha z\in
V$ for all $y,z\in V$, $\alpha\in I$.
\end{theo}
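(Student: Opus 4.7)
The plan is to construct maps in both directions of the bijection, verify the axioms on each side, and separately derive $6)\Rightarrow 6+)$ from order-theoretic considerations --- the implication then feeds back into the continuous extension needed in the direction from $ic$ to $\xi$. Given $\xi\colon\mdn X\to X$ satisfying the algebra laws, set $x\op\alpha y:=\xi(c_{x,\alpha,y})$, where $c_{x,\alpha,y}\in\mdn X$ is the $\cup$-capacity whose u.s.c.\ function takes value $1$ at $x$, $\alpha$ at $y$, and $0$ elsewhere. Continuity is immediate. Axioms 1) and 5) reduce to $c_{x,\alpha,x}=\delta_x=c_{x,0,y}$ together with the unit law $\xi\circ\eta X=\uni X$. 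For axioms 2)--4) one realizes the three-point capacity taking values $1,\alpha,\beta$ at distinct $x,y,z$ as $\mu X(\CCC)$ for two different $\CCC\in\mdn^2X$, obtained by grouping its support in two ways; applying $\xi\circ\mu X=\xi\circ\mdn\xi$ then reads off the required equality. Property 6) follows from continuity of $\xi$, since subbasic $\mdn X$-neighborhoods of $\delta_x$ are closed under pointwise maxima, making their $\xi$-images into subsemilattice neighborhoods of $x=\xi(\delta_x)$.

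\textbf{Direction 2 (from $ic$ to algebra).} For a finitely supported $c=\bigvee_{i=0}^n\alpha_i\delta_{x_i}\in\mdn X$ with $\max_i\alpha_i=1$, put
\[
\tilde\xi(c):=\alpha_0x_0\op\alpha_1x_1\op\cdots\op\alpha_nx_n.
\]
The big associative law, together with absorption of duplicate points and removal of zero summands (all consequences of 1)--5)), makes $\tilde\xi$ well-defined independently of the chosen representation. Finitely supported capacities are dense in $\mdn X$, since every u.s.c.\ function $X\to I$ is an infimum of finite max-combinations of weighted Diracs coming from finite open covers of $X$. Property $6+)$ --- proved below from $6)$ --- is then used to extend $\tilde\xi$ continuously to all of $\mdn X$. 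The unit axiom reduces to $\tilde\xi(\delta_x)=x$, and the multiplication axiom reduces on finitely supported capacities to the big associative law, propagating to all of $\mdn X$ by continuity.

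\textbf{Proof that $6)\Rightarrow 6+)$.} Establish first the bracketing $y\le y\op\alpha z\le y\lor z$ in the order induced by $\lor:=\op 1$. For the lower bound, apply 3) and 1):
\[
y\op 1(y\op\alpha z)=(y\op 1 y)\op(1\om\alpha)z=y\op\alpha z,
\]
so $y\le y\op\alpha z$. For the upper bound, the big associative law applied to $(y\op\alpha z)\op 1(y\op 1 z)$ gives the combination with summands $1\cdot y,\,\alpha\cdot z,\,1\cdot y,\,1\cdot z$, which collapses by absorption to $y\op 1 z=y\lor z$, hence $y\op\alpha z\le y\lor z$. Under 6), $(X,\lor)$ is a compact Lawson upper semilattice~\cite{Laws69}; by the structure theory of such semilattices (for instance via embedding into powers $I^A$), compact order-convex subsemilattice neighborhoods form a local base. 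Given $U\ni x$, pick such $V\subset U$ at $x$: for $y,z\in V$ one has $y\lor z=y\op 1 z\in V$ by the subsemilattice property, and $y\op\alpha z\in[y,y\lor z]\subset V$ by order-convexity.

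\textbf{Main obstacle.} The principal difficulty is the continuous extension of $\tilde\xi$ in Direction 2: two approximating nets of finitely supported capacities converging to the same $c\in\mdn X$ must yield the same limit in $X$, and continuity of the extension must be verified on all of $\mdn X$. Property $6+)$ is the essential tool, providing neighborhoods in $X$ closed under \emph{all} convex combinations, so that perturbations in the subbasic topology of $\mdn X$ (sets $O_-(F,a)$ and $O_+(U,a)$) translate to controlled perturbations of $\tilde\xi$-values uniformly in the support. A secondary technicality is the verification of axioms 2)--4) in Direction 1, which requires selecting specific elements of $\mdn^2X$ whose $\mu X$-images realize prescribed three-point capacities on $X$.
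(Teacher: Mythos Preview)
Your Direction 1 and the verification of axioms via the associativity law of the monad match the paper. The real divergence is in Direction 2 and in how $6)\Rightarrow 6+)$ is obtained.

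The paper does \emph{not} define $\xi$ by density and extension. Using only condition 6), $(X,\lor)$ is a compact Lawson upper semilattice, so $\sup F$ exists for every closed nonempty $F\subset X$ and depends continuously on $F$ in the Vietoris topology. The paper then sets, for all $c\in\mdn X$ at once,
\[
\xi(c)=\sup\{x_0\op\alpha x\mid x\in X,\ \alpha\le c(x)\},\qquad c(x_0)=1,
\]
and proves well-definedness and continuity by the embedding $gr:X\to X^X$, $gr(x)=(x\lor y)_{y\in X}$, together with continuity of the hypograph $c\mapsto\{(x,\alpha):\alpha\le c(x)\}$. The algebra axioms are then checked by direct manipulation of this formula. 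Property $6+)$ is obtained in Direction 1 from continuity of $\xi$ at $\delta_x$; since Direction 2 needs only $6)$, the implication $6)\Rightarrow 6+)$ falls out as a corollary of the bijection rather than being proved in advance. Your order-theoretic derivation of $6+)$ via the bracketing $y\le y\op\alpha z\le y\lor z$ and order-convex subsemilattice neighborhoods is a legitimate independent argument, and is forced on you because your Direction 2 consumes $6+)$.

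The genuine gap is the continuous extension itself, which you correctly flag as the main obstacle but do not close. Property $6+)$ gives convex neighborhoods in $X$, but passing from ``$c'$ close to $c$ in $\mdn X$'' to ``$\tilde\xi(c')$ close to $\tilde\xi(c)$'' still requires comparing values of $\tilde\xi$ on capacities with possibly disjoint supports. The identities $\tilde\xi(c\lor c')=\tilde\xi(c)\lor\tilde\xi(c')$ and monotonicity of $\tilde\xi$ give only one-sided control; to get two-sided control you need, in effect, to realize $\tilde\xi(c)$ as the supremum of a closed set that varies continuously with $c$ --- which is exactly the paper's formula. So your route is not wrong, but completing it seems to require reconstructing the paper's explicit $\sup$-definition, after which the density step becomes superfluous.
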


\begin{proof}
Let $(X,\xi)$ be an~$\Mdn$-algebra. Define the operation
$ic:X\times I\to X$ by the formula $ic(x,\alpha,y)=\xi(\delta_x\op
\alpha\delta_y)$. It is obvious that $\xi$ is well-defined,
continuous and satisfies 1), 4), 5). To prove 2), observe that by
the definition of an algebra for a monad we obtain
\begin{multline*}
(x\op\alpha y)\op \beta z=
\xi(\delta_{\xi(x\op\alpha y)}\op\beta\delta_z)=\\
\xi\circ \mdn\xi(\delta_{\delta_x\op\alpha\delta_y}\op\beta\delta_{\delta_z})=
\xi\circ \mudn
X(\delta_{\delta_x\op\alpha\delta_y}\op\beta\delta_{\delta_z})=\\
\xi(\delta_x\op\alpha\delta_y\op\beta\delta_z)=
\xi(\delta_x\op\beta\delta_z\op\alpha\delta_y)=
(x\op \beta z)\op \alpha y.
\end{multline*}
Proof of 3) is quite analogous. Thus the map $ic$ is an idempotent
convex combination of two points, and we consider idempotent convex
combinations of arbitrary finite number of points to be defined as
described above.

Let $U$ be a neighborhood of $x\in X$. By continuity of $\xi$ and
the equality $\xi(\delta_x)=x$ there is a neighborhood $\tilde
U\subset\mdn X$ of $\delta_x$ such that for all $c\in\tilde U$ we
have $\xi(c)\in U$. There also exists a neighborhood $\tilde V\ni
x$ such that for all $y_0,y_1,\dots,y_n\in \tilde V$,
$(\alpha_0,\alpha_1,\dots,\alpha_n)\in\Delop^n$ we have
$\alpha_0\delta_{y_0}\op\alpha_1\delta_{y_1}\op\dots\op
\alpha_n\delta{y_n}\in\tilde U$. It is straightforward to
verify that the set
$$
V=\{
\alpha_0y_0\op\alpha_1y_1\op\dots\op\alpha_ny_n
\mid
n\in\{0,1,\dots\}, (\alpha_0,\alpha_1,\dots,\alpha_n)\in\Delop^n,
y_0,y_1,\dots,y_n\in\tilde V
\}
$$
is a neighborhood of $x$ requested by 6+), which implies 6). Thus
it is proved that an $\Mdn$-algebra $(X,\xi)$ determines a
continuous operation $ic$ that satisfies conditions 1)--6).

Now assume that we are given a compactum $X$ and a continuous
operation $ic:X\times I\times X\to X$ that satisfies conditions
1)--6). Recall that $X$ with the operation $\lor:X\times X\to X$,
defined by the formula $x\lor y=x\op 1y$, is a compact Lawson upper
semilattice, therefore for all nonempty closed $F\subset X$ there
is $\sup F$ that depends on $F$ continuously w.r.t.\ Vietoris
topology~\cite{McWat}. Let $c\in\mdn X$ and $c(x_0)=1$ for some
$x_0\in X$. We put $\xi(c)=\sup\{x_0\op\alpha x\mid x\in
X,\alpha\le c(x)\}$. We will prove that $\xi:\mdn X\to X$ is well
defined (i.e. does not depend on the choice of $x_0$) and
continuous.

For each $x\in X$ let $gr(x)$ be the collection $(x\lor y)_{y\in
X}\in X^X$. Then the map of compacta $gr:X\to X^X$ is continuous
and injective, therefore is an embedding.

The equality
\begin{multline*}
\xi(c)\lor y=
\sup\{x_0\op\alpha x\mid  x\in X,\alpha\le c(x)\}\lor y=\\
\sup\{y\op 1x_0\op\alpha x\mid  x\in X,\alpha\le c(x)\}=
\sup\{(y\op 1x_0)\lor(y \op\alpha x)\mid  x\in X,\alpha\le
c(x)\}=\\
(y\op 1x_0)\lor\sup\{(y\op \alpha x\mid  x\in X,\alpha\le c(x)\}=
\sup\{(y\op \alpha x\mid  x\in X,\alpha\le c(x)\}.
\end{multline*}
holds for each $y\in X$, and the latter expression does not depend
on $x_0$. This implies that $gr(\xi(c))$ and thus $\xi(c)$ are
uniquely determined. Moreover, $pr_y\circ gr(\xi(c))$ is the
supremum of the image of the closed set $\{(x,\alpha)\mid x\in
X,\alpha\in I,\alpha\le c(x)\}\subset X\times I$ under the
continuous map that sends $(x,\alpha)$ to $y\op\alpha x\in X$.
Taking into account that this set (the \emph{hypograph} of the
function $c:X\to I$) depends on $c\in\mdn X$ continuously, we
obtain that the correspondence $c\mapsto gr(\xi(c))$ is continuous,
which implies continuity of $\xi:\mdn X\to X$.

To show that $(X,\xi)$ is an $\Mdn$-algebra, we again assume
$c(x_0)=1$ for a capacity $c\in \mdn X$. Then
\begin{gather*}
y\op\alpha\xi(c)=
y\op\alpha\sup\{x_0\op\beta x\mid  x\in X,\beta\le c(x)\}=\\
\sup\{y\op \alpha x_0\op(\alpha\om\beta) x\mid  x\in X,\beta\le
c(x)\}=\\
\sup\{(y\op \alpha x_0)\lor(y \op(\alpha \om\beta) x)\mid  x\in X,
\beta\le c(x)\}=\\
(y\op \alpha x_0)\lor \sup\{y \op(\alpha \om\beta) x\mid  x\in X,
\beta\le c(x)\}=\\
\sup\{y\op (\alpha\om\beta) x\mid  x\in X,\beta\le c(x)\}.
\end{gather*}
holds for each $y\in X$, $\alpha\in I$. The second equality sign
follows from an ``infinite distributive law'' $y\op\alpha\sup
F=\sup\{y\op\alpha x\mid x\in F\}$, with $F$ a nonempty subset of
$X$. This law is first proved for finite $F$ and then extended to
infinite case by continuity of lowest upper bounds.

It is obvious that $\xi(\delta_x)=x$ for a point $x\in X$, i.e.
$\xi\circ\etadn X=\uni{X}$. We choose a capacity $\CCC\in
\mdn^2X$ and compare $\xi\circ \mdn\xi(\CCC)$ and $\xi\circ\mudn
X(\CCC)$. For a point $y\in X$ we have
\begin{multline*}
y\lor(\xi\circ \mdn\xi(\CCC))=
\\
\sup\{(y\op \alpha x\mid  x\in X,\alpha\le \mdn\xi(\CCC)(x)\}=
\sup\{(y\op \alpha \xi(c)\mid  c\in\mdn X,\alpha\le \CCC(c)\}=
\\
\sup\{\sup\{y\op (\alpha\om\beta)x \mid  x\in X,\beta\le c(x)\},
c\in\mdn X,\alpha\le \CCC(c)\}=
\\
\sup\{(y\op \alpha x\mid  x\in X,c\in\mdn X,\alpha\le \min\{\CCC(c),c(x)\}\}=
y\lor (\xi\circ \mudn X(\CCC)).
\end{multline*}
This implies $\xi\circ \mdn\xi=\xi\circ\mudn X$, i.e. $(X,\xi)$ is
a $\Mdn$-algebra.

To prove that the correspondence ``$\Mdn$-algebra $\leftrightarrow$
idempotent convex combination that satisfies 1)--6)'' is
one-to-one, assume that for some continuous $ic:X\times I\times X$
satisfying 1)--6) there is a continuous map $\xi':\mdn X\to X$ such
that $(X,\xi')$ is a $\Mdn$-algebra and
$ic(x,\alpha,y)=\xi'(\delta_x\op\alpha\delta_y)$ for all $x,y\in
X$, $\alpha\in I$. Therefore $\xi'(\delta_x\op\alpha\delta_y)=
\xi(\delta_x\op\alpha\delta_y)$ for the constructed above map
$\xi$. Let $1\ge \alpha_1\ge\alpha_2\ge 0$, $x_0,x_1,x_2\in X$,
then
\begin{gather*}
\xi(\delta_{x_0}\op\alpha_1\delta_{x_1}\op\alpha_2\delta_{x_2})=
\xi\circ\mudn X(\delta_{\delta_{x_0}}
\op\alpha_1\delta_{\delta_{x_1}\op\alpha_2\delta_{x_2}})=\\
\xi\circ\mdn\xi(\delta_{\delta_{x_0}}
\op\alpha_1\delta_{\delta_{x_1}\op\alpha_2\delta_{x_2}})=
\xi(\delta_{x_0}\op\alpha_1\delta_{\xi(\delta_{x_1}\op\alpha_2\delta_{x_2})})=\\
\xi'(\delta_{x_0}\op\alpha_1\delta_{\xi'(\delta_{x_1}\op\alpha_2\delta_{x_2})})=
\dots=
\xi'(\delta_{x_0}\op\alpha_1\delta_{x_1}\op\alpha_2\delta_{x_2}).
\end{gather*}
By induction in a similar manner we prove that
$$
\xi(\delta_{x_0}\op\alpha_1\delta_{x_1}\op\alpha_2\delta_{x_2}\op
\dots\op\alpha_n\delta_{x_n})=
\xi'(\delta_{x_0}\op\alpha_1\delta_{x_1}\op\alpha_2\delta_{x_2}\op
\dots\op\alpha_n\delta_{x_n})
$$
for arbitrary integer $n\ge0$. By continuity we deduce that $\xi(c)=\xi'(c)$
for all $c\in\mdn X$.
\end{proof}

Let $ic:X\times I\times X\to X$ and $ic':X'\times I\times X'\to X'$
be idempotent convex combinations. We say that a~map $f:(X,ic)\to
(X',ic')$ is \emph{affine} if it preserves idempotent convex
combination, i.e. $f(ic(x,\alpha,y))=ic'(f(x),\alpha,f(y))$ for all
$x,y\in X$, $\alpha\in I$.

\begin{theo}\label{Mdn-mor}
Let $(X,\xi)$, $(X',\xi')$ be $\Mdn$-algebras, $ic:X\times I\times
X\to X$ and $ic':X'\times I\times X'\to X'$ be the respective
idempotent convex combinations. Then a continuous map $f:X\to Y$ is
a morphism of $\Mdn$-algebras $(X,\xi)\to(X',\xi')$ if and only if
$f:(X,ic)\to (X',ic')$ is affine.
\end{theo}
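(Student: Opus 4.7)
The plan is to use the correspondence $(X,\xi)\leftrightarrow(X,ic)$ supplied by Theorem~\ref{Mdn-alg}, encoded by $ic(x,\alpha,y)=\xi(\delta_x\op\alpha\delta_y)$, so that both the morphism condition $f\circ\xi=\xi'\circ\mdn f$ and the affinity identity $f(ic(x,\alpha,y))=ic'(f(x),\alpha,f(y))$ are compared on the convenient family of binary capacities $\delta_x\op\alpha\delta_y$.

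For the ``only if'' direction, assume $f\colon X\to X'$ is a morphism of $\Mdn$-algebras. Naturality of $\etadn$ forces $\mdn f(\delta_x)=\delta_{f(x)}$, and a direct check from the definition $\mdn f(c)(F)=c(f^{-1}(F))$ shows that $\mdn f$ commutes with the external operations $\op$ and $\om$ on capacities; in particular $\mdn f(\delta_x\op\alpha\delta_y)=\delta_{f(x)}\op\alpha\delta_{f(y)}$. Applying $\xi'$ and invoking $f\circ\xi=\xi'\circ\mdn f$ yields $f(ic(x,\alpha,y))=ic'(f(x),\alpha,f(y))$, i.e.\ $f$ is affine.

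For the ``if'' direction, suppose $f$ is affine. Specializing to $\alpha=1$ makes $f$ a continuous homomorphism of the compact Lawson upper semilattices $(X,\lor)$ and $(X',\lor')$; therefore $f$ preserves suprema of arbitrary nonempty closed subsets, since $f\circ\sup$ and $\sup\circ\exp f$ are continuous maps $\exp X\to X'$ by \cite{McWat}, agree on finite subsets, and finite subsets are dense in $\exp X$. I then reuse the explicit formula from the proof of Theorem~\ref{Mdn-alg}: for $c\in\mdn X$ with $c(x_0)=1$,
$$\xi(c)=\sup\{x_0\op\beta x\mid x\in X,\beta\le c(x)\}.$$
A routine unravelling of $\mdn f(c)(F)=c(f^{-1}(F))$ for sup-capacities gives $\mdn f(c)(y)=\max\{c(x)\mid f(x)=y\}$ (equal to $0$ if $y\notin f(X)$), the maximum being attained by upper semicontinuity of $c$ on the compact fibre $f^{-1}(y)$. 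Hence $\mdn f(c)(f(x_0))=1$ and
\begin{align*}
\xi'(\mdn f(c))
&=\sup\{f(x_0)\op\beta y\mid y\in X',\beta\le \mdn f(c)(y)\}\\
&=\sup\{f(x_0)\op\beta f(x)\mid x\in X,\beta\le c(x)\}\\
&=\sup\{f(x_0\op\beta x)\mid x\in X,\beta\le c(x)\}=f(\xi(c)),
\end{align*}
where the second equality reindexes the supremum using attainability, the third uses affinity, and the last uses preservation of suprema of the compact subset $\{x_0\op\beta x\mid \beta\le c(x)\}\subset X$, which is the continuous image of the hypograph of $c$.

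The main obstacle is bridging the finitary affinity of $f$ with the infinitary supremum defining $\xi$. This is overcome in two stages: affinity upgrades to a continuous Lawson semilattice homomorphism, and such a homomorphism preserves all closed-set suprema by the continuity of $\sup$ on $\exp$. The auxiliary identification of $\mdn f(c)$ on sup-capacities with the preimage-maximum is elementary but must be recorded to allow the supremum indexed by $X'$ to be rewritten as one indexed by $X$.
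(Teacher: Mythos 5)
Your proposal is correct and follows essentially the same route as the paper: necessity by evaluating $f\circ\xi=\xi'\circ\mdn f$ on the capacities $\delta_x\op\alpha\delta_y$, and sufficiency by noting that affinity at $\alpha=1$ makes $f$ a semilattice homomorphism preserving closed-set suprema, then rewriting $\xi'(\mdn f(c))$ via the explicit supremum formula and reindexing over $X$. The only difference is that you spell out two steps the paper leaves implicit (density of finite sets in $\exp X$ for preservation of suprema, and attainment of the fibrewise maximum justifying the reindexing), which is a welcome but not structurally different addition.
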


\begin{proof}
\textsl{Necessity.} Let $f:(X,\xi)\to(X',\xi')$ be a morphism of
$\Mdn$-algebras,$x,y\in X$, $\alpha\in I$. Then

$$
f(ic(x,\alpha,y))=f\circ\xi(\delta_x\lor\alpha\delta_y)=
\xi'\circ Mf(\delta_x\lor\alpha\delta_y)=
\xi'(\delta_{f(x)}\lor\alpha\delta_{f(y)})=ic'(f(x),\alpha,f(y)).
$$

\textsl{Sufficiency.} Let $f:(X,ic)\to(X',ic')$ be affine, then
$f(x\lor y)=f(x)\lor f(y)$ for all $x,y\in X$. Continuity of $f$
implies that $f$ preserves suprema of closed sets. For $c\in\mdn X$
we choose a point $x_0\in X$ such that $c(x_0)=1$, then $\mdn
f(x)(f(x_0))=1$. Therefore~:
\begin{multline*}
\xi'\circ \mdn f(c)=
\sup\{f(x_0)\op\alpha x'\mid x'\in
X',\alpha\le \mdn f(c)(x')\}=\\
\sup\{f(x_0)\op\alpha f(x)\mid x\in
X,\alpha\le c(x)\}=
\sup\{f(x_0\op\alpha x)\mid x\in
X,\alpha\le c(x)\}=\\
f(\sup\{x_0\op\alpha x\mid x\in
X,\alpha\le c(x)\})=
f\circ \xi(c),
\end{multline*}
and $f$ is a~morphism of $\Mdn$-algebras.

\end{proof}

\begin{rem}
It is easy to see that $(\max,\min)$-idempotent convex compacta and
their affine continuous maps constitute a category $\Convmm$ of
$(\max,\min)$-idempotent convex compacta that by the latter theorem
is monadic (=tripleable)~\cite{Swir74} over the category of
compacta.
\end{rem}

Convex compacta are usually defined as compact closed subsets of
locally convex topological vector spaces. To obtain a similar
description for $(\max,\min)$-idempotent convex compacta, we need
some extra definitions and facts. For an idempotent
semiring~\cite{DSIS03} $\CCS=(S,\op,\om,0,1)$ a (left idempotent)
$\CCS$-semimodule is a set $L$ with operations $\op:L\times L\to L$
and $\om:S\times L\to L$ such that for all $x,y,z\in L$,
$\alpha,\beta\in S$~:

1) $x\op y=y\op x$;

2) $(x\op y)\op z=x\op (y\op z)$;

3) there is an (obviously unique) element $\bar 0\in L$ such that
$x\op\bar 0=x$ for all $x$;

4) $\alpha\om(x\op y)=(\alpha\om x)\op(\alpha\om y)$,
$(\alpha\op\beta)\om x=(\alpha\om x)\op(\beta\om x)$;

5) $(\alpha\om\beta)\om x=\alpha\om(\beta\om x)$;

6) $1\om x=x$;

7) $0\om x=\bar 0$.

We adopt the usual convention and write $\alpha x$ instead of
$\alpha\om x$. Observe that these axioms imply $\alpha\bar 0=\bar
0$, $x\op x=x$. Informally speaking, an idempotent semimodule is a
vector space over an idempotent semiring.

If $\CCS=(I,\max,\min,0,1)$, we will talk about a
$(\max,\min)$-\emph{idempotent semimodule}. In this case we define
an operation $ic:L\times I\times L\to L$ by the formula
$ic(x,\alpha,y)=x\op(\alpha\om y)$ ($\op$ and $\om$ are from $L$).
It is easy to see that $ic$ satisfies 1)--5). The combination
$\alpha_0x_0\op\alpha_1x_0\op\dots\op\alpha_nx_n$ of points
$x_0,x_1,\dots,x_n$ is defined in an obvious way and coincides with
the described above operation if
$(\alpha_0,\alpha_1,\dots,\alpha_n)\in
\Delop^n$. A subset $A$ of a $(\max,\min)$-idempotent semimodule $L$
is called \emph{convex} if $x\op \alpha y\in A$ whenever $x,y\in
A$, $\alpha\in I$. A~convex subset $A\subset L$ contains all
idempotent convex combinations of its elements.

Let a $(\max,\min)$-idempotent semimodule $L$ be a compactum, the
operations $\op$ and $\om$ be continuous, and the topology on $L$
satisfy an additional condition~:

8) for a neighborhood $U$ of any element $x\in L$ there is a
neighborhood $V$ of $x$, $V\subset U$, such that $y\op z\in V$ for
all $y,z\in V$.

Then we call $(L,\op,\om)$ a~\emph{compact Lawson
$(\max,\min)$-idempotent semimodule}. By the above theorem $L$ is a
$\Mdn$-algebra, which implies

8+) for a neighborhood $U$ of any element $x\in L$ there is a
neighborhood $V$ of $x$, $V\subset U$, such that $y\op\alpha z\in
V$ for all $y,z\in V$, $\alpha\in I$.

Thus for every point of $L$ there is a local base that consists of
convex neighborhoods, and we say that $L$ is \emph{locally convex}.

The nature of a compactum $X$ with an idempotent convex
combination that satisfies 1)--6) is clarified by the following
\begin{theo}\label{Mdn-sm}
A pair of a compactum $X$ and a continuous map $ic:X\times I\times
X\to X$ is a $(\max,\min)$-idempotent convex compactum if and only
if $X$ is a closed convex subset of a compact Lawson
$(\max,\min)$-idempotent semimodule $(L,\op,\om)$ such that
$ic(x,\alpha,y)\equiv
\underset{\text{in }L}{\underbrace{x\op\alpha y}}$.
\end{theo}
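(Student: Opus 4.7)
For sufficiency, suppose $X$ is a closed convex subset of a compact Lawson $(\max,\min)$-idempotent semimodule $(L,\op,\om)$ and set $ic(x,\alpha,y):=x\op\alpha y$ computed in $L$. Convexity of $X$ keeps $ic$ inside $X$; the five axioms for $ic$ translate directly from the seven semimodule axioms of $L$; and axiom 6) is inherited from axiom 8) for $L$: a neighbourhood $U$ of $x\in X$ lifts to an open $\tilde U\subset L$, axiom 8) provides a smaller $\op$-closed $\tilde V\subset\tilde U$, and then $V:=\tilde V\cap X$ is $\op$-closed in $X$ by convexity. Continuity of $ic|_X$ is inherited from $L$.

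For necessity, given $(X,ic)$, Theorem~\ref{Mdn-alg} equips $X$ with an $\Mdn$-algebra structure $\xi:\mdn X\to X$. My strategy is to put $A:=C_{\mathrm{aff}}(X,I)$, the set of continuous affine maps $X\to I$, take $L:=I^A$ with pointwise operations and product topology, and consider the evaluation map $\iota:X\to L$, $x\mapsto(a(x))_{a\in A}$. As a product of copies of the semimodule $I$, $L$ is a compact Lawson $(\max,\min)$-idempotent semimodule. The image $\iota(X)$ is closed by compactness of $X$, and coordinatewise affineness $a(ic(x,\alpha,y))=a(x)\op\alpha a(y)$ immediately makes $\iota(X)$ convex in $L$, with $ic(x,\alpha,y)$ identified with $\iota(x)\op\alpha\iota(y)$.

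The only remaining question is whether $\iota$ is injective, equivalently whether $A$ separates the points of $X$. This is the main technical obstacle. I plan to derive separation from the compact Lawson semilattice structure on $(X,\lor)$ established in the proof of Theorem~\ref{Mdn-alg}: Lawson's 1969 theorem~\cite{Laws69} provides enough continuous $\lor$-semilattice morphisms $X\to I$ to separate points. These are not automatically affine, so I would upgrade each such $\lor$-morphism to an affine map by averaging against the $ic$ structure, using the $\Mdn$-algebra identity $\xi\circ\mdn\xi=\xi\circ\mudn X$ and the free compact Lawson $(\max,\min)$-idempotent semimodule structure of the space $\tilde MX$ of upper semicontinuous functions $X\to I$ (into which the embedding may alternatively be carried out, identifying each $f$ with its closed hypograph in $X\times I$). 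Once separation is established, $\iota$ realizes $X$ as a closed convex subset of the compact Lawson semimodule $L=I^A$, finishing the proof.
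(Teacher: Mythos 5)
Your sufficiency argument is fine and coincides with the paper's (which simply declares it obvious). For necessity, however, your proposal has a genuine gap at exactly the point you flag yourself: you never establish that the continuous affine maps $X\to I$ separate points, i.e.\ that the evaluation map $\iota:X\to I^A$ is injective. Lawson's theorem gives you continuous $\lor$-semilattice morphisms $X\to I$ separating points, but these need not intertwine the scalar action, and the proposed ``upgrade'' of a semilattice morphism to an affine map ``by averaging against the $ic$ structure'' is not a construction --- there is no idempotent analogue of averaging on offer, and the identity $\xi\circ\mdn\xi=\xi\circ\mudn X$ does not by itself produce affine functionals. In effect you are assuming an idempotent Hahn--Banach/functional-representation theorem for $(\max,\min)$-idempotent convex compacta, which is precisely the hard content; note that the paper explicitly leaves the closely related question of (bi)affine embeddability into powers $I^A$ open. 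Your fallback --- embedding into upper semicontinuous functions (hypographs) on $X$ --- also does not work as stated, since the Dirac-type embedding $x\mapsto\delta_x$ is not affine and its image is not convex in that semimodule, so again some nontrivial argument is missing.

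The paper avoids the separation issue entirely by building the enveloping semimodule directly from $X$: it takes $\bar X=(X\times I)/{\sim}$, where $(x_1,a_1)\sim(x_2,a_2)$ iff $y\op a_1x_1=y\op a_2x_2$ for all $y\in X$, identifies $\bar X$ with the image of the map $gr:X\times I\to X^X$, $gr(x,a)=(t\op a x)_{t\in X}$, embeds $X$ via $x\mapsto[(x,1)]$, and defines $\op$ as coordinatewise join and $\alpha\om[(x,a)]=[(x,\alpha\om a)]$; the semimodule axioms are checked pointwise in $X^X$, and condition 8) follows from completeness of the semilattice $\bar X$ together with continuity of suprema in the Vietoris topology (McWaters). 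That construction needs no functionals at all, only well-definedness of the quotient operations, which is why it closes the argument where your plan stalls. If you want to pursue the $I^A$ route, you must first prove the separation statement (for instance via an idempotent separation theorem in the spirit of Cohen--Gaubert--Quadrat adapted to this compact Lawson setting), and that would be a separate, substantial lemma rather than a routine step.
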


\begin{proof}
Sufficiency is obvious. To prove necessity, assume that $X$ is a
compactum and a continuous map $ic:X\times I\times X\to X$
satisfies conditions 1)--6). We define an equivalence relation
``$\sim$'' on $X\times I$ as follows~: $(x_1,a_1)\sim (x_2,a_2)$ if
$y\op a_1x_1=y\op a_2x_2$ for all $y\in X$. This relation is closed
in $(X\times I)\times (X\times I)$, therefore the quotient space
$X\times I/{\sim}$, which we denote by $\bar X$, is a compact
Hausdorff space. We also denote by $[(x,a)]$ the equivalence class
of the pair $(x,a)$. The map $i:X\to\bar X$ that sends a point
$x\in X$ to $[(x,1)]$ is an embedding because $(x_1,1)\sim (x_2,1)$
is possible only if $x_1=x_2$.

We define operations $\om:I\times \bar X\to\bar X$ and $\op:\bar
X\times\bar X\to\bar X$ by the formulae
$\alpha\om[(x,a)]=[(x,\alpha\om a)]$ and
$$
[(x,a)]\op[(y,b)]=
\begin{cases}
[(x\op b y,a)],a\ge b,\\
[(y\op a y,b)],a\le b.
\end{cases}
$$

The element $\bar 0=[(x,0)]$ does not depend on $x$ and satisfies
3). Properties 5), 6), 7) are obvious. Verification that $\op$,
$\om$ are well defined, continuous and satisfy 1), 2), 4), 8), is
more convenient with a generalization of the mapping $gr:X\to X^X$
that was defined in the proof of the latter theorem. To avoid
introducing extra denotations, we denote by $gr(x,\alpha)$, where
$x\in X$, $\alpha\in I$, the collection $(t\op\alpha x)_{t\in X}$.
Then the map $gr:X\times I\to X^X$ is continuous (but, as can be
shown, not injective). It is obvious that $(x_1,\alpha_1)\sim
(x_2,\alpha_2)$ if and only if $gr(x_1,\alpha_1)=gr(x_2,\alpha_2)$,
thus we will identify the image of the map $gr$ with the quotient
space $\bar X=X\times I/{\sim}$, and $gr$ with the quotient map.

Let $\bar x,\bar y,\bar z$ be points in $\bar X$, and $\bar
x=gr(x,a)=(x_t)_{t\in X}$, $\bar y=gr(y,b)=(y_t)_{t\in X}$, $\bar
z=gr(z,c)=(z_t)_{t\in X}$. Observe that $x\op y=(x_t\lor
y_t)_{t\in}$, $\alpha\om \bar x=(t\op\alpha x_t)_{t\in X}$,
therefore $\bar x\op \bar y$ and $\alpha\om \bar x$ are uniquely
determined and continuous w.r.t.\ $\bar x$, $\bar y$ and $\alpha$,
$\bar x$ resp. Similar expressions can be written for $x\op z$ and
$y\op z$, and 1),2) are easily seen. Next, $\alpha\om \bar
x=(t\op\alpha x_t)_{t\in X}$, $\alpha\om \bar y=(t\op\alpha
y_t)_{t\in X}$, thus
$$
(\alpha\om \bar x)\op(\alpha\om \bar y)=
((t\op\alpha x_t)\lor(t\op\alpha x_t))_{t\in X}=
((t\op\alpha (x_t\lor\alpha x_t))_{t\in X}=
\alpha\om(\bar x\op\bar y).
$$
Similarly
$$
(\alpha\om \bar x)\op(\beta\om \bar x)=
((t\op\alpha x_t)\lor(t\op\beta x_t))_{t\in X}=
((t\op \alpha x_t\op\beta x_t))_{t\in X}=
(\alpha\op\beta)\om\bar x,
$$
and condition 4) holds.

Let $G\subset \bar X$ be a closed nonempty set, then $G=gr(F)$ for
some closed $F\subset X\times I$. There is $(x_0,a_0)\in F$ such
that $a_0=\max\{a\mid (x,a)\in F\}$. It is easy to show that $\sup
G$ in $\bar X$ is equal to $[(x',a_0)]$ where $x'=\sup\{x_0\op
ax\mid (x,a)\in F\}$, thus the upper semilattice $\bar X$ is
complete. It is also clearly seen that
$$
gr(x',a_0)=(\sup\{t\op ax\mid (x,a)\in F\})_{t\in X}=
(\sup\{x_t\mid (x_t)_{t\in X}\in G\})_{t\in X},
$$
therefore $\sup G$ depends on $G$ continuously w.r.t.\ Vietoris
topology. It is a statement equivalent to 8)~\cite{McWat}.
\end{proof}

As triples $\Mdn$ and $\Mup$ are isomorphic through a~natural
transformation $\kappa$ defined in~\cite{NHl}, and the map $I\to I$
that sends each $t$ to $1-t$ is an isomorphism of the idempotent
semirings $(I,\op,\om,0,1)$ and $(I,\om,\op,1,0)$, by duality we
immediately can state an analogue of Theorem~\ref{Mdn-alg}. Its
proof can be obtained by replacing $\mdn$ by $\mup$, $\om$ by
$\op$, $1$ by $0$, upper semilattices by lower ones, $\lor$ by
$\land$, $\sup$ by $\inf$, $\Delop$ by the~\emph{(idempotent)
$n$-dimensional $\om$-simplex}
$$
\Delom^n=\{(\alpha_0,\alpha_1,\dots,\alpha_n)\in
I^{n+1}\mid \alpha_0\om\alpha_1\om\dots\om\alpha_n=0\}
$$
and vice versa, where it is necessary. Thus we define
\emph{dual idempotent convex combinations} and
$(\min,\max)$-\emph{idempotent convex compacta} that are precisely
$\Mup$-algebras. We omit obvious details. Observe that for a given
$\Mup$-algebra $(X,\xi)$ the respective dual idempotent convex
combination $ci:X\times I\times X\to X$ is determined by the
equality $ci(x,\alpha,y)=\xi(\delta_x\land(\alpha\lor\delta_y))$.
Conversely, the value $\xi(c)$ for a capacity $c\in\mdn X$
(assuming that $c(X\setminus
\{x_0\})=0$) is equal to $\xi(c)=\inf\{ci(x_0,\alpha,x)\mid x\in
X,\alpha\ge c(X\setminus\{x)\}$.

It is easy also to formulate analogues of
Theorems~\ref{Mdn-mor},\ref{Mdn-sm}.

\section{Algebras for the capacity monad}

In the sequel a~\emph{$(\min,\max)$-idempotent biconvex compactum}
is a compactum $X$ with four operations $\bp:X\times X\to X$,
$\om:I\times X\to X$, $\bm:X\times X\to X$, $\op:I\times X\to X$
such that $(X,\bp,\bm)$ is a Lawson lattice, $(X,\bp,\om)$ is an
$(I,\op,\om)$-semimodule, $(X,\bm,\op)$ is an
$(I,\om,\op)$-semimodule, the associative laws $(\alpha\op
x)\bp y=\alpha\op(x\bp y)$, $(\alpha\om
x)\bm y=\alpha\om(x\bm y)$ and the distributive laws
$\alpha\om(\beta\op x)=(\alpha\om\beta)\op(\alpha\om x)$,
$\alpha\op(\beta\om x)=(\alpha\op\beta)\om(\alpha\op x)$
are valid for all $x,y\in X$, $\alpha,\beta\in I$.

\begin{theo}\label{M-alg}
Let $X$ be a compactum. There is a one-to-one correspondence
between~:

1) continuous maps $\xi:MX\to X$ such that the pair $(X,\xi)$
is an~$\BBM$-algebra;

2) quadruples $(\bp,\om,\bm,\op)$ of
continuous operations $\bp:X\times X\to X$, $\om:I\times X\to X$,
$\bm:X\times X\to X$, $\op:I\times X\to X$ such that
$(X,\bp,\om,\bm,\op)$ is a~$(\max,\min)$-idempotent biconvex
compactum;

3) quadruples $(\bp,\om,p,m)$ of continuous maps $\bp:X\times X\to X$,
$\bm:X\times X\to X$, $p,m:I\to X$ such that
\begin{description}
\item{a)} $(X,\bp,\bm)$ is a Lawson lattice;
\item{b)} $p:(I,\op)\to(X,\bp)$ is a morphism of upper
semilattices that preserves a top element;
\item{c)} $m:(I,\om)\to(X,\bm)$ is a morphism of lower
semilattices that preserves a bottom element;
\item{d)} for all $\alpha,\beta\in I$ we have
$m(\alpha)\om p(\beta)=p(\alpha\om\beta)$,
$m(\alpha)\op p(\beta)=m(\alpha\op\beta)$.
\end{description}

 In the case 2) the following property of \emph{local
biconvexity} holds~: for a neighborhood $U$ of any element $x\in X$
there is a neighborhood $V$ of $x$, $V\subset U$, such that
$y\bp(\alpha\bm z)\in V$, $y\bm(\alpha\bp z)\in V$ for all $y,z\in
V$, $\alpha\in I$.
\end{theo}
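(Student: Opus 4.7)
The plan is to establish $(1)\Leftrightarrow(2)\Leftrightarrow(3)$ together with the local biconvexity clause, using Theorem~\ref{Mdn-alg} and its dual together with the submonad embeddings $\Mdn\hookrightarrow\BBM$ and $\Mup\hookrightarrow\BBM$ as the main lever. The equivalence $(2)\Leftrightarrow(3)$ is purely algebraic and does not involve the monad. Given (2), the compact Lawson lattice $(X,\bp,\bm)$ has a top $\top$ and a bottom $\bot$; I would set $p(\alpha)=\alpha\om\top$ and $m(\alpha)=\alpha\op\bot$ and verify (a)--(d) by direct substitution into the four cross laws and the semimodule axioms of (2), using also that $p(I)$ and $m(I)$ are chains in the lattice. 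Conversely, given (3), I would set $\alpha\om x=x\bm p(\alpha)$, $\alpha\op x=x\bp m(\alpha)$, $\bot=m(0)$, $\top=p(1)$, and recover the semimodule identities and cross laws from Lawson lattice distributivity combined with clause~(d).

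For $(1)\Rightarrow(2)$ I would use that any $\BBM$-algebra $(X,\xi)$ restricts to an $\Mdn$-algebra on $\mdn X\subset MX$ and to an $\Mup$-algebra on $\mup X\subset MX$, whence Theorem~\ref{Mdn-alg} and its dual furnish the operations $(\bp,\om)$ and $(\bm,\op)$ respectively. The absorption laws making $(X,\bp,\bm)$ into a Lawson lattice, and the cross associative/distributive laws of (2), are then established by applying the algebra identity $\xi\circ M\xi=\xi\circ\mu X$ to capacities in $M^2 X$ of mixed type --- Dirac measures combined using both pointwise maximum and pointwise minimum --- in the same style as the verification of axiom~2) in the proof of Theorem~\ref{Mdn-alg}.

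The hard direction is $(2)\Rightarrow(1)$: one must construct a continuous $\xi:MX\to X$ satisfying the algebra axioms out of the biconvex structure alone. My plan is to define, for $c\in MX$, a generalized Sugeno-type expression such as
\[
\xi(c)=\bigsqcap\bigl\{m(\alpha)\bp\bigsqcup_{x\in F}x:F\subcl X,\ \alpha\in I,\ c(F)<\alpha\bigr\}
\]
which on $\mdn X$ restricts to the $\Mdn$-algebra structure of Theorem~\ref{Mdn-alg} applied to the $(\bp,\om)$-reduct of $X$ and on $\mup X$ restricts to its dual. Continuity with respect to the subbasic topology on $MX$ follows from continuity of $\bp$, $\bm$, $m$ together with compactness of $X$; the unit axiom $\xi\circ\eta X=\uni X$ is immediate; and the multiplication axiom $\xi\circ M\xi=\xi\circ\mu X$ is established by a Fubini-type commutation of nested $\bp$ and $\bm$, using the cross laws of (2) together with the explicit formula for $\mu X$ recalled in the preliminaries.

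Local biconvexity is then obtained exactly as in the proof of Theorem~\ref{Mdn-alg}: starting from a neighborhood $U\ni x$, one pulls back via $\xi$ to a neighborhood of $\delta_x$ in $MX$ and uses continuity of the four operations on $X$ to produce a neighborhood $V\ni x$ whose $(\bp,\bm,\op,\om)$-closure is contained in $U$. The chief obstacle is precisely the well-definedness and algebra compatibility of $\xi$ in $(2)\Rightarrow(1)$: a general capacity, unlike a $\cup$- or $\cap$-capacity, is not determined by its values on singletons, so no direct weighted-combination formula extending the $\cup$- and $\cap$-cases is evident, and the consistency of the candidate expression with both submonad structures and with $\mu X$ rests essentially on the cross laws of (2).
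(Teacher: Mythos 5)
Your reduction of $(2)\Leftrightarrow(3)$ and your route $(1)\Rightarrow(2)$ are reasonable (the paper gets the Lawson lattice in $(1)\Rightarrow(3)$ more cleanly, by restricting $\xi$ along the inclusion-hyperspace submonad $\BBG\hra\BBM$ and invoking Radul's theorem, which in particular yields the small-sublattices property that your two separate semilattice structures coming from $\mdn X$ and $\mup X$ do not immediately give). The genuine gap is in the hard direction $(2)\Rightarrow(1)$, which you do not actually prove: you propose a Sugeno-type formula for $\xi(c)$, but you neither verify that it restricts to the $\Mdn$- and $\Mup$-algebra structures, nor prove continuity, nor --- crucially --- the multiplication axiom $\xi\circ M\xi=\xi\circ\mu X$, which you dispatch as a ``Fubini-type commutation'' while simultaneously conceding at the end that the well-definedness and algebra compatibility of $\xi$ is ``the chief obstacle.'' That obstacle is exactly the content of the theorem; asserting an unverified formula does not overcome it, and it is not even checked that your candidate agrees with $\xidn$ on $\mdn X$ (your expression mixes $m(\alpha)$ with $\bp$, whereas the needed infimum description is in terms of $p$, i.e.\ $\inf\{c(X\setminus A)\op\sup A\mid A\subcl X\}$).

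The missing idea is the structural fact the paper leans on: by Theorem~8 of \cite{NHl}, $\mu X(\mdn\mup X)=\mu X(\mup\mdn X)=MX$, so the restrictions of $\mu X$ to $\mdn\mup X$ and $\mup\mdn X$ are quotient maps of compacta. The paper defines $\xi$ not by a closed formula on $MX$ but by descent: it shows $\xidn\circ\mdn\xiup(\CCC)$ depends only on $\mu X(\CCC)$ (using the complete-distributivity formulas for $\xidn,\xiup$ on the compact Lawson lattice), so the square $\xi\circ\mu X=\xidn\circ\mdn\xiup$ determines a continuous $\xi$; the algebra law $\xi\circ M\xi=\xi\circ\mu X$ is then obtained by a chain of diagram chases in which the epimorphy of these restricted multiplications transfers commutativity from cubes built over $\mdn$ and $\mup$ to the front square over $M$, and the same epimorphy yields that $\xi$ is the \emph{unique} $\BBM$-algebra structure inducing the given biconvex operations --- a uniqueness statement your proposal never addresses, although it is needed for the claimed one-to-one correspondence. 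Without this decomposition-and-descent mechanism (or a genuine substitute, e.g.\ a fully verified integral representation of $\xi$ on all of $MX$), your argument for $(2)\Rightarrow(1)$ does not go through.
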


\begin{proof}

1)$\to$3). Let $(X,\xi)$ be an~$\BBM$-algebra. We use the fact that
$\BBG$ is a~submonad of the capacity monad $\BBM$. The components
of an embedding $i_G:\BBG\hra\BBM$ are of the form
$$
i_GX(\CCA)(F)=
\begin{cases}
1, \text{ if }F\in \CCA,\\
0\text{ --- otherwise}.
\end{cases}
$$
Therefore $(X,\xi\circ i_GX)$ is a~$\BBG$-algebra.
Theorem~2~\cite{RadG90} states that for a $\BBG$-algebra
$(X,\theta)$ the operations $\bp:X\times X\to X$, $\om:I\times X\to
X$ defined by the formulae $x\bp
y=\theta(\eta_GX(x)\cap\eta_GX(y))$ and $x\bm
y=\theta(\eta_GX(x)\cup\eta_GX(y))$ are such that $(X,\bp,\bm)$ is
a Lawson lattice. We apply this theorem to $\theta=\xi\circ i_GX$
and obtain that $X$ with the operations $x\bp
y=\xi(\delta_x\lor\delta_y)$ and $x\bm
y=\xi(\delta_x\land\delta_y)$ is a Lawson lattice. We denote by
$\bar 0$ and $\bar 1$ its least and greatest elements. Now we put
$p(\alpha)=\xi(\delta_{\bar 0}\lor
\alpha\om\delta_{\bar 1})$, $m(\alpha)=\xi(\delta_{\bar 1}\land
\alpha\op\delta_{\bar 0})$. It is obvious that $p,m$ are continuous
and $p(1)=\bar 0\bp\bar 1=\bar 1$, $m(0)=\bar 1\bm\bar 0=\bar 0$.
Next, for all $\alpha,\beta\in I$~:
\begin{gather*}
p(\alpha\op \beta)=
\xi(\delta_{\bar 0}\lor (\alpha\op\beta)\om\delta_{\bar 1})=
\xi\circ \mu X(
\delta_{\delta{\bar 0}\lor \alpha\om\delta_{\bar 1}}
\lor
\delta_{\delta{\bar 0}\lor \beta\om\delta_{\bar 1}})=\\
\xi\circ M\xi(
\delta_{\delta{\bar 0}\lor \alpha\om\delta_{\bar 1}}
\lor
\delta_{\delta{\bar 0}\lor \beta\om\delta_{\bar 1}})=
\xi(
\delta_{p(\alpha)}\lor \delta_{p(\beta)})=p(\alpha)\bp p(\beta).
\end{gather*}
Similarly $m(\alpha\om \beta)=m(\alpha)\bm m(\beta)$ for all
$\alpha,\beta\in I$. We also have
\begin{gather*}
m(\alpha)\om p(\beta)=
\xi(
\delta_{\xi(\delta_{\bar 1}\land \alpha\op\delta_{\bar 0})}
\land
\delta_{\xi(\delta_{\bar 0}\lor \beta\om\delta_{\bar 1})}
)=
\xi\circ M\xi(
\delta_{\delta_{\bar 1}\land \alpha\op\delta_{\bar 0}}
\land
\delta_{\delta_{\bar 0}\lor \beta\om\delta_{\bar 1}}
)=\\
\xi\circ \mu X(
\delta_{\delta_{\bar 1}\land \alpha\op\delta_{\bar 0}}
\land
\delta_{\delta_{\bar 0}\lor \beta\om\delta_{\bar 1}}
)=
\xi(
\delta_{\bar 0}\lor (\alpha\om\beta)\om\delta_{\bar 1}
)=p(\alpha\om\beta),
\end{gather*}
as well as $m(\alpha)\op p(\beta)=m(\alpha\op\beta)$.

3)$\to$2). It is sufficient to put $\alpha\om x=m(\alpha)\bm x$,
$\alpha\op x=p(\alpha)\bp x$, and it is clear that all conditions
of 2) are satisfied due to the commutative, associative and
distributive laws in $(X,\bp,\bm)$.

Observe also that, if $m,p$ are determined by an~$\BBM$-algebra $(X,\xi)$
as described above, then
\begin{gather*}
\alpha\om x=
\xi(
\delta_{\xi(\delta_{\bar 1}\land\alpha\op\delta_{\bar 0})}
\land \delta_x)=
\xi(
\delta_{\xi(\delta_{\bar 1}\land\alpha\op\delta_{\bar 0})}
\land \delta_x)=
\xi\circ M\xi(
\delta_{\delta_{\bar 1}\land\alpha\op\delta_{\bar 0}}
\land \delta_{\delta_x}
)=\\
\xi\circ \mu X(
\delta_{\delta_{\bar 1}\land\alpha\op\delta_{\bar 0}}
\land \delta_{\delta_x}
)=
\xi(
\delta_{\bar 1}\land\alpha\op\delta_{\bar 0}\land \delta_x
)=
\xi\circ \mu X(
\delta_{\delta_{x}\land\alpha\op\delta_{\bar 0}}
\land \delta_{\delta_{\bar 1}}
)=\\
\xi\circ M\xi(
\delta_{\delta_{x}\land\alpha\op\delta_{\bar 0}}
\land \delta_{\delta_{\bar 1}}
)=
\xi(\delta_{x}\land\alpha\op\delta_{\bar 0})\bm\bar 1=
\xi(\delta_{x}\land\alpha\op\delta_{\bar 0}),
\end{gather*}
and similarly $\alpha\op x=\xi(\delta_x\lor \alpha\om\delta_{\bar
1})$ for all $x\in X$, $\alpha\in I$. In the same manner we can
show that $x\bp(\alpha\om
y)=\xi(\delta_{x}\lor\alpha\om\delta_{y})$, $x\bm(\alpha\op
y)=\xi(\delta_{x}\land\alpha\op\delta_{y})$ for all $x,y\in X$,
$\alpha\in I$. These formulae are the same that were used to define
idempotent semiconvex combinations and dual idempotent semiconvex
combinations in the proofs of Theorem~\ref{Mdn-alg} and the dual
theorem.

2)$\to$1).

Now let $(X,\bp,\om,\bm,\op)$ be a~$(\min,\max)$-idempotent
biconvex compactum. If $ic(x,\alpha,y)=x\bp(\alpha\om y)$,
$ci(x,\alpha,y)=x\bm(\alpha\op y)$, then it is obvious that
$(X,ic)$ is a $(\min,\max)$-idempotent convex compactum and
$(X,ci)$ is a $(\max,\min)$-idempotent convex compactum. Thus by
Theorem~\ref{Mdn-alg} and the dual theorem, if mappings $\xidn:\mdn
X\to X$ and $\xiup:\mup X\to X$ are defined by the formulae
$$
\xidn(c)=\sup\{x_0\bp(\alpha\om x)\mid x\in X,\alpha\le c(x)\},
\quad c\in \mdn X,x_0\in X, c(x_0)=1,
$$
and
$$
\xiup(c)=\inf\{x_0\bm(\alpha\op x)\mid x\in X,\alpha\ge c(X\setminus\{x\})\},
\quad c\in \mup X,x_0\in X, c(X\setminus\{x_0\})=0,
$$
then the pairs $(X,\xidn)$ and $(X,\xiup)$ are resp.\
an~$\Mdn$-algebra and an~$\Mup$-algebra. In our case we can define
$\xidn,\xiup$ by simpler but equivalent formulae (the second ``$=$'' sign
in each equality is due to complete distributivity of a compact Lawson lattice)~:
$$
\xidn(c)=
\sup\{c(x)\om x\mid x\in X\}
=\inf\{c(X\setminus A)\op\sup A\mid A\subcl X\},
\quad c\in \mdn X,
$$
and
$$
\xiup(c)=
\inf\{c(X\setminus\{x\})\op x\mid x\in X\}
=\sup\{c(X\setminus A)\om\inf A\mid A\subcl X\},
\quad c\in \mup X.
$$

If $\xi,\xi':MX\to X$ are continuous maps such that the~pairs
$(X,\xi)$, $(X,\xi')$ are $\BBM$-algebras and $\xi|_{\mdn
X}=\xi'|_{\mdn X}=\xidn$, $\xi|_{\mup X}=\xi'|_{\mup X}=\xiup$,
then the two following diagrams have to be commutative (we omit
explicit notations for restrictions)~:
\begin{equation*}
\begin{gathered}
\xymatrix{
\mdn\mup X
\ar[r]^{\mu X}
\ar[d]_{\mdn\xiup}
&
MX
\ar[d]^{\xi}
\\
\mdn
\ar[r]_{\xidn}
&
X
}
\end{gathered}
\quad\text{(*)}
\qquad
\begin{gathered}
\xymatrix{
\mup\mdn X
\ar[r]^{\mu X}
\ar[d]_{\mup\xidn}
&
MX
\ar[d]^{\xi'}
\\
\mup
\ar[r]_{\xiup}
&
X
}
\end{gathered}
\quad\text{(**)}
\end{equation*}

We show that if $\CCC,\CCC'\in\mup\mdn X$ are such that $\mu
X(\CCC)=\mu X(\CCC)$, then
$\xidn\circ\mdn\xiup(\CCC)=\xidn\circ\mdn\xiup(\CCC')$. Observe
that $\mu X(\CCC)=\mu X(\CCC)$ implies that for all $A\subcl X$ and
$\alpha\in I$ the existence of $c\in\mup X$ such that
$\CCC(c)\ge\alpha$ and $c(A)\ge \alpha$ is equivalent to the
existence of $c'\in\mup X$ such that $\CCC'(c')\ge\alpha$ and
$c'(A)\ge \alpha$. It is also obvious that the same statement is
valid for any open $A\subset X$. Thus~:
\begin{gather*}
\xidn\circ\mdn\xiup(\CCC)=
\sup\{\mdn\xiup(\CCC)(x)\om x\mid x\in X\}=\\
\sup\{\CCC(c)\om\xiup(c)\mid c\in\mup X\}=\\
\sup\{\CCC(c)\om
\sup\{c(X\setminus A)\om\inf A\mid A\subcl X\}
\mid c\in\mup X\}=\\
\sup\{\CCC(c)\om c(X\setminus A)\om\inf A
\mid A\subcl X, c\in\mup X\}=\\
\sup\{\alpha\om\inf A
\mid A\subcl X,c\in\mup X,\alpha\in I,
\alpha\le\CCC(c),\alpha\le c(X\setminus A)\}=\\
\sup\{\alpha\om\inf A
\mid A\subcl X,c'\in\mup X,\alpha\in I,
\alpha\le\CCC'(c'),\alpha\le c'(X\setminus A)\}=\\
\dots =\xidn\circ\mdn\xiup(\CCC').
\end{gather*}

An obvious dual statement is also valid. Taking into account that
by Theorem~8~\cite{NHl} for a compactum $X$ the equality
$\mu(\mup\mdn X)=\mu(\mdn\mup X)=MX$ is valid, and
$\mu_X|_{\mup\mdn X}:\mup\mdn X\to MX$ and $\mu_X|_{\mdn\mup
X}:\mdn\mup X\to MX$ are quotient maps as continuous surjective
maps of compacta, we obtain that the diagrams (*) and (**) uniquely
determine continuous maps $\xi,\xi':MX\to X$.

In the diagram
$$
\xymatrix{
\mdn^2\mup X
\ar[rr]^{\mudn\mup X}
\ar[rd]^{\mdn^2\xiup}
\ar[dd]_{\mdn\mu X}
& &
\mdn\mup X
\ar[rd]^{\mdn\xiup}
\ar[dd]_>>>>>{\mu X}
\\
&
\mdn^2X
\ar[rr]^<<<<<<<{\mudn X}
\ar[dd]_<<<<{\mdn\xidn}
& &
\mdn X
\ar[dd]^{\xidn}
\\
\mdn MX
\ar[rr]^>>>>>>>{\mu X}
\ar[rd]_{\mdn\xi}
& &
MX
\ar[rd]^{\xi}
\\
&
\mdn X
\ar[rr]_{\xidn}
& &
X
}
$$
the top square and the side squares are commutative, and the
leftmost vertical arrow is epimorphic, therefore the bottom square
commutes as well. Using also dual arguments, we show that the two
following diagrams are commutative~:
\begin{equation*}
\begin{gathered}
\xymatrix{
\mdn M X
\ar[r]^{\mu X}
\ar[d]_{\mdn\xi}
&
MX
\ar[d]^{\xi}
\\
\mdn X
\ar[r]_{\xidn}
&
X
}
\end{gathered}
%\quad\text{(I)}
\qquad
\begin{gathered}
\xymatrix{
\mup M X
\ar[r]^{\mu X}
\ar[d]_{\mup\xidn'}
&
MX
\ar[d]^{\xi'}
\\
\mup X
\ar[r]_{\xiup}
&
X
}
\end{gathered}
%\quad\text{(II)}
\end{equation*}

We apply the functor $\mup$ to the left diagram and combine it with
(**)~:
$$
\xymatrix{
&
M^2X
\ar[rd]^{M\xi}
\\
\mup\mdn MX
\ar[ur]^{\mu MX}
\ar[r]^{\mup\mdn \xi}
\ar[d]^{\mup\mu X}
\ar@/_5ex/[dd]_{\mu MX}
&
\mup\mdn X
\ar[r]^{\mu X}
\ar[d]^{\mup\xidn}
&
MX
\ar[d]^{\xi'}
\\
\mup MX
\ar[r]^{\mup\xi}
\ar[rd]^{\mu X}
&
\mup X
\ar[r]^{\xiup}
&
X
\\
M^2X\ar[r]_{\mu X}
&
MX
\ar[ur]_{\xi'}
}
$$
The restriction of $\mu MX$ to $\mup\mdn MX$ is an epimorphism,
therefore the commutativity of the outer contour imply that the
left of the two following diagrams commutes. The right diagram is
commutative by dual arguments.
\begin{equation*}
\begin{gathered}
\xymatrix{
M^2X
\ar[r]^{\mu X}
\ar[d]_{M\xi}
&
MX
\ar[d]^{\xi'}
\\
MX
\ar[r]_{\xi'}
&
X
}
\end{gathered}
%\quad\text{(A)}
\qquad
\begin{gathered}
\xymatrix{
M^2X
\ar[r]^{\mu X}
\ar[d]_{M\xi'}
&
MX
\ar[d]^{\xi}
\\
MX
\ar[r]_{\xi}
&
X
}
\end{gathered}
%\quad\text{(B)}
\end{equation*}

Therefore in the diagram
$$
\xymatrix{
M^3X
\ar[rr]^{M\mu X}
\ar[rd]^{\mu MX}
\ar[dd]_{M^2\xi}
& &
M^2X
\ar[rd]^{\mu X}
\ar[dd]_>>>>>{M\xi'}
\\
&
M^2X
\ar[rr]^<<<<<<<{\mu X}
\ar[dd]_<<<<{M\xi}
& &
MX
\ar[dd]^{\xi}
\\
M^2X
\ar[rr]^>>>>>>>{M\xi'}
\ar[rd]_{\mu X}
& &
MX
\ar[rd]^{\xi}
\\
&
MX
\ar[rr]_{\xi}
& &
X
}
$$
the front square is commutative, which is the ``harder part'' of
the definition of $\BBM$-algebra. A proof of the ``easier part''
$\xi\circ\eta X=\uni{X}$ is straightforward. Thus $(X,\xi)$ is
a~\emph{unique} $\BBM$-algebra such that $x\bp(\alpha\om
y)=\xi(\delta_x\lor (\alpha\om\delta_y))$ and $x\bm(\alpha\op
y)=\xi(\delta_x\land(\alpha\op\delta_y))$ for all $x,y\in X$,
$\alpha\in I$. As a by-product we obtain that $\xi=\xi'$, i.e.
defininions of $\xi$ by the diagrams (*) and (**) are equivalent.

To prove local biconvexity, for a given neighborhood $U$ of a point
$x$ by continuity of $\xi$ and the equality $\xi(\delta_x)=x$ we
choose a neighborhood $\tilde U\subset\mdn X$ of $\delta_x$ such
that for all $c\in\tilde U$ we have $\xi(c)\in U$. There exists
a~neighborhood $\hat U\ni x$ such that for all
$x_0,x_1,\dots,x_n\in
\hat U$, $(\alpha_0,\alpha_1,\dots,\alpha_n)\in\Delop^n$ we have
$\alpha_0\delta_{x_0}\lor\alpha_1\delta_{x_1}\lor\dots\lor
\alpha_n\delta{x_n}\in\tilde U$. Now we choose a~neighborhood $\tilde{\tilde
U}\subset\mup X$ of $\delta_x$ such that for all $c\in\tilde U$ we
have $\xi(c)\in \hat U$. There is also a~neighborhood $\hat{\hat
U}\ni x$ such that $y_0,y_1,\dots,y_n\in \hat{\hat U}$,
$(\alpha_0,\alpha_1,\dots,\alpha_n)\in\Delom^n$ imply
$\alpha_0\delta_{y_0}\land\alpha_1\delta_{y_1}\land\dots\land
\alpha_n\delta{y_n}\in \tilde{\tilde U}$. Now we put
\begin{multline*}
\tilde V=\{
(\alpha_\op 0y_0)\bm(\alpha_1\op y_1)\bm\dots\bm(\alpha_n\op y_n)
\mid
\\
n\in\{0,1,\dots\}, (\alpha_0,\alpha_1,\dots,\alpha_n)\in\Delom^n,
y_0,y_1,\dots,y_n\in\hat{\hat U}
\},
\end{multline*}
and the set
\begin{multline*}
V=\{
(\alpha_0\om y_0)\bp(\alpha_1\om y_1)\bp\dots\bp(\alpha_n\om y_n)
\mid
\\
n\in\{0,1,\dots\}, (\alpha_0,\alpha_1,\dots,\alpha_n)\in\Delop^n,
x_0,x_1,\dots,x_n\in\tilde V
\}
\end{multline*}
is a neighborhood of $x$ requested by local bicommutativity.
\end{proof}

For $(\max,\min)$-idempotent biconvex compacta
$(X,\bp,\om,\bm,\op)$ and $(X',\bp,\om,\bm,\op)$ we say that a~map
$f:X\to X'$ is \emph{biaffine} if it preserves idempotent convex
combination and the dual idempotent convex combination, i.e.
$f(x\bp(\alpha\om y))=f(x)\bp(\alpha\om f(y))$, $f(x\bm(\alpha\op
y))=f(x)\bm(\alpha\op f(y))$ whenever $x,y\in X$, $\alpha\in I$.

\begin{theo}
Let $(X,\xi)$, $(X',\xi')$ be $\BBM$-algebras and quadruples
$(\bp,\om,\bm,\op)$ of continuous operations be determined on $X$
and $X'$ by $\xi$ and $\xi'$ resp.\ (in the sense of
Theorem~\ref{M-alg}).Then a continuous map $f:X\to Y$ is a morphism
of $\Mdn$-algebras $(X,\xi)\to(X',\xi')$ if and only if
$(X,\bp,\om,\bm,\op)\to(X',\bp,\om,\bm,\op)$ is biaffine.
\end{theo}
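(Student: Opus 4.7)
The plan is to mirror the proof of Theorem~\ref{Mdn-mor}: handle the $(\bp,\om)$-part and the $(\bm,\op)$-part separately, then glue them together using the reconstruction of $\xi$ from $\xidn$ and $\xiup$ performed inside the proof of Theorem~\ref{M-alg}.

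\textsl{Necessity} is almost immediate. I would exploit the identities $x\bp(\alpha\om y)=\xi(\delta_x\lor(\alpha\om\delta_y))$ and $x\bm(\alpha\op y)=\xi(\delta_x\land(\alpha\op\delta_y))$ established inside the proof of Theorem~\ref{M-alg}. A direct calculation from $Mf(c)(F)=c(f^{-1}(F))$ shows that $Mf$ intertwines with the pointwise operations $\lor,\land$ and with the pointwise scalar actions $\alpha\om(\cdot),\,\alpha\op(\cdot)$, so in particular $Mf(\delta_x\lor(\alpha\om\delta_y))=\delta_{f(x)}\lor(\alpha\om\delta_{f(y)})$ and symmetrically for the $\land,\op$-combination. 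Substituting into the algebra-morphism identity $f\circ\xi=\xi'\circ Mf$ yields biaffinity of $f$.

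\textsl{Sufficiency} is the substantive direction. Assume $f$ is biaffine. Preservation of idempotent convex combinations, by Theorem~\ref{Mdn-mor}, forces $f$ to be a morphism of $\Mdn$-algebras $(X,\xidn)\to(X',\xidn')$, i.e.\ $f\circ\xidn=\xidn'\circ\mdn f$; preservation of the dual combinations, by the dual of Theorem~\ref{Mdn-mor}, forces $f\circ\xiup=\xiup'\circ\mup f$. To promote these two partial morphism identities to the full equality $f\circ\xi=\xi'\circ Mf$ I would invoke the fact (established inside Theorem~\ref{M-alg}) that $\xi$ is uniquely determined by $\xidn$ and $\xiup$ through commutativity of the diagram labelled (*), which works because $\mu X|_{\mdn\mup X}\colon\mdn\mup X\to MX$ is a surjective quotient map (Theorem~8 of~\cite{NHl}). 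Representing an arbitrary $c\in MX$ as $c=\mu X(\CCC)$ with $\CCC\in\mdn\mup X$, a short diagram chase
\begin{align*}
f\circ\xi(c)&=f\circ\xidn\circ\mdn\xiup(\CCC)=\xidn'\circ\mdn f\circ\mdn\xiup(\CCC)\\
&=\xidn'\circ\mdn\xiup'\circ\mdn\mup f(\CCC)\\
&=\xi'\circ\mu X'\circ\mdn\mup f(\CCC)=\xi'\circ Mf\circ\mu X(\CCC)=\xi'\circ Mf(c),
\end{align*}
whose penultimate equality is naturality of $\mu$, delivers the algebra-morphism identity on all of $MX$.

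The only real obstacle is bookkeeping: verifying that $\mdn f$ and $\mup f$ really agree with the appropriate restrictions of $M^2f$ to $\mdn\mup X$ so that naturality of $\mu$ can be applied exactly as indicated, and that the two partial morphism identities slot into the chase in the places claimed. No new ideas beyond Theorem~\ref{Mdn-mor}, its dual, and the internal reconstruction of $\xi$ inside Theorem~\ref{M-alg} are required.
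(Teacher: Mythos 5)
Your proposal is correct and follows essentially the same route as the paper: necessity via the identities $x\bp(\alpha\om y)=\xi(\delta_x\lor(\alpha\om\delta_y))$, $x\bm(\alpha\op y)=\xi(\delta_x\land(\alpha\op\delta_y))$ as in Theorem~\ref{Mdn-mor}, and sufficiency by combining Theorem~\ref{Mdn-mor} with its dual and passing through the epimorphism $\mu X|_{\mdn\mup X}:\mdn\mup X\to MX$. Your element-wise chase is exactly the commuting-cube argument the paper gives, just written out pointwise.
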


\begin{proof}
\textsl{Necessity.}
Let $f$ be a morphism of algebras. It was shown in the proof of the
previous theorem that the idempotent convex combination and the
dual idempotent convex combination of points $x,y\in X$ are
determined by the formulae $x\bp(\alpha\om
y)=\xi(\delta_{x}\lor\alpha\om\delta_{y})$, $x\bm(\alpha\op
y)=\xi(\delta_{x}\land\alpha\op\delta_{y})$ (in $X'$ the same but
$\xi$ replaced with $\xi'$). Then we follow the line of the proof
of Theorem~\ref{Mdn-mor}.

\textsl{Sufficiency.}
Let $f$ be biaffine. Then by Theorem~\ref{Mdn-mor} and a dual
theorem $f$ is a morphism of $\mdn$-algebras $(X,\xi|_{\mdn X})\to
(X',\xi'|_{\mdn X'})$ and a morphism of $\mup$-algebras
$(X,\xi|_{\mup X})\to (X',\xi'|_{\mup X'})$, i.e. the diagrams
\begin{equation*}
\begin{gathered}
\xymatrix{
\mdn X
\ar[r]^{\mdn f}
\ar[d]_{\xi|_{\mdn X}}
&
\mdn X'
\ar[d]^{\xi'|_{\mdn X'}}
\\
X
\ar[r]_{f}
&
X'
}
\end{gathered}
%\quad\text{(A)}
\qquad
\begin{gathered}
\xymatrix{
\mup X
\ar[r]^{\mup f}
\ar[d]_{\xi|_{\mup X}}
&
\mup X'
\ar[d]^{\xi'|_{\mup X'}}
\\
X
\ar[r]_{f}
&
X'
}
\end{gathered}
%\quad\text{(B)}
\end{equation*}
are commutative. Therefore the top face and the side faces of the
diagram
$$
\xymatrix{
\mdn\mup X
\ar[rr]^{\mdn\mup f}
\ar[rd]^{\mdn(\xi|_{\mup X})}
\ar[dd]_{\mu X}
& &
\mdn\mup X'
\ar[rd]^{\mdn(\xi'|_{\mup X'})'}
\ar[dd]_>>>>>{\mu X'}
\\
&
\mdn MX
\ar[rr]^<<<<<<<{\mdn f}
\ar[dd]_<<<<{\xi|_{\mdn X}}
& &
\mdn MX'
\ar[dd]^{\xi'|_{\mdn X'}}
\\
MX
\ar[rr]^>>>>>>>{Mf}
\ar[rd]_{\xi}
& &
MX'
\ar[rd]^{\xi'}
\\
&
X
\ar[rr]_{f}
& &
X'
}
$$
commute. The leftmost arrow $\mu X:\mdn\mup X\to MX$ is an
epimorphism, thus the bottom face commutes as well, i.e. $f$ is a
morphism of $\BBM$-algebras.
\end{proof}

\begin{rem}
The latter theorem implies that the category $\BConvmm$ of
$(\max,\min)$-idempotent biconvex compacta and their continuous
biaffine maps is monadic over the category of compacta.
\end{rem}

\begin{rem}
Note that a biaffine map $f:(X,\bp,\om,\bm,\op)\to
(X',\bp,\om,\bm,\op)$ not necessarily preserves operations $\op$
and $\om$ (although it preserves $\bp$ and $\bm$). E.g., let
$X=X'=I$, $\op=\bp=\max$, $\om=\bm=\min$, $f(x)=\max\{x,\frac12\}$.
Then $f$ is biaffine, but $f(0\om 1)=\frac12\ne 0\om f(1)=0$. It is
easy to show that a biaffine continuous map
$f:(X,\bp,\om,\bm,\op)\to (X',\bp,\om,\bm,\op)$ preserves $\om$ iff
it preserves a bottom element, and it preserves $\op$ iff it
preserves a top element.
\end{rem}

We present an example of $(\max,\min)$-idempotent biconvex
compacta. Let $A$ be a set and for each $a\in A$ a non-decreasing
surjective map $\phi_a:I\to I$ is fixed. For $x,y\in I^A$,
$x=(x_a)_{a\in A}$, $y=(y_a)_{a\in A}$, $\alpha\in I$ we put $x\bp
y=(\max\{x_a,y_a\})_{a\in A}$, $x\bm y= (\min\{x_a,y_a\})_{a\in
A}$, $\alpha\om x=(\min\{\phi_a(\alpha),x_a\})_{a\in A}$,
$\alpha\op x=(\max\{\phi_a(\alpha),x_a\})_{a\in A}$. Then
$(X,\bp,\om,\bm,\op)$ obviously satisfies the definition. In
communication with M.~Zarichnyi a question arose~:

\begin{que}
Does every $(\max,\min)$-idempotent biconvex compactum
biaffinely
embed into some $I^A$ with the defined above operations?
\end{que}

Provided the answer is positive, any biconvex map
$f:(X,\bp,\om,\bm,\op)\to(X',\bp,\om,\bm,\op)$ algebraically (with
preservation of idempotent and dual idempotent convex combinations)
and topologically embeds into a biconvex map that is a projection
of some $I^A$ onto $I^B$, $B\subset A$ (operations on $I^A$ and
$I^B$ are defined as above).

\end{document}